\journal{Arxiv}
\newtheorem{definition}{Definition}
\newtheorem{theorem}{Theorem}
\newtheorem{proposition}{Proposition}
\newtheorem{lemma}{Lemma}
\newtheorem{corollary}{Corollary}
\begin{document}

\global\long\def\P{\mathbb{P}}%
\global\long\def\N{\mathbb{N}}%
\global\long\def\U{\mathbb{U}}%
\global\long\def\E{\mathbb{E}}%
\global\long\def\R{\mathbb{R}}%
\global\long\def\G{\mathcal{G}}%
\global\long\def\g{\mathbb{\Pi}}%
\global\long\def\F{\mathcal{F}}%
\global\long\def\S{\mathbb{S}}%
\global\long\def\Q{\mathcal{Q}}%
\global\long\def\B{\mathcal{B}}%
\global\long\def\ND{\mathcal{N}}%
\global\long\def\XX{\mathcal{X}}%
\global\long\def\indep#1{{\perp\hspace{-2mm}\perp}#1}%
\global\long\def\L{\mathcal{L}}%
\global\long\def\var{\mathrm{var}}%
\global\long\def\cov{\mathrm{cov}}%
\global\long\def\charf{\mathbf{1}}%
\global\long\def\d{\mathrm{d}}%
\global\long\def\M{\mathcal{M}}%
\global\long\def\Exp{\mathrm{Exp}}%
\global\long\def\Uniform{\mathrm{U}}%
\global\long\def\eqd{\stackrel{d}{=}}%
\global\long\def\eqas{\stackrel{a.s.}{=}}%
\global\long\def\X{\mathcal{X}}%
\global\long\def\supp{\mathrm{support}}%
\global\long\def\H{\mathcal{H}}%
\global\long\def\Z{\mathcal{Z}}%
\global\long\def\as{\qquad a.s.}%
\global\long\def\on{\qquad\text{on }}%
\global\long\def\C{\mathcal{C}}%
\global\long\def\barxi{\overline{\xi}}%
\global\long\def\Po{\mathrm{Po}}%
\global\long\def\Bi{\mathrm{Bi}}%
\global\long\def\Be{\mathrm{Be}}%
\global\long\def\defined{\stackrel{\text{def}}{=}}%
\global\long\def\barA{\overline{A}}%
\global\long\def\A{\mathcal{A}}%
\global\long\def\barx{x}%
\global\long\def\barX{TBD}%
\global\long\def\last{\ell}%
\global\long\def\bary{y}%
\global\long\def\barz{z}%
\global\long\def\DD{\mathcal{D}}%

\begin{frontmatter}

\title{Contractivity of Bellman Operator in Risk Averse Dynamic Programming with Infinite Horizon}

\author{Milo\v{s} Kopa}
\address{Charles University, Faculty of Mathematics and Physics, Department of Probability and Mathematical Statistics, Sokolovska 83, Prague, Czech Republic}

\author{Martin \v{S}m\'{i}d}
\address{The Czech Academy of Sciences, Institute of Information Theory and Automation, Pod Vodarenskou vezi 4, Prague, Czech Republic}



\begin{abstract}
The paper deals with a risk averse dynamic programming problem with infinite horizon. First, the required assumptions are formulated to have the problem well defined. Then the Bellman equation is derived, which may be also seen as a standalone reinforcement learning problem. The fact that the Bellman operator is contraction is proved, guaranteeing convergence of various solution algorithms used for dynamic programming as well as reinforcement learning problems, which we demonstrate on the value iteration algorithm.
\end{abstract}

\begin{keyword}
risk aversion \sep dynamic programming \sep infinite horizon
\MSC[2010] 90C39 \sep  91G70
\end{keyword}

\end{frontmatter}

\section{Introduction}
Risk averse variants of Dynamic Programming are widely studied. Our work is very close to \cite{ruszczynski2010risk} who, for a very similar setting, proves the convergence of the Value Iteration algorithm. The contribution of our work is two-fold. First, instead of complicated axiomatic definition, we defined the one-stage risk mapping constructively by means of its risk envelope, which is moreover independent of a choice of an underlying probability measure; consequently, the whole exposition, including proofs, is much simpler. Second, instead of the convergence of a particular solution algorithm, we prove the contractive property of the Bellman operator, which can be then plugged into convergence proofs of many different algorithms.    

Let $(\Omega, \F, P)$
be a probability space and let $\mathbf{F} := (\F_0, . . . ,\F_t,...)$
be a filtration, i.e. a sequence of increasing sigma algebras: $\F_0 \subset \F_1 \subset ... \subset \F$.
Consider a process
$\left\{Z_t\right\}, t = 0,1,...,$  adapted to the filtration $\mathbf{F}$, specifically $Z_t \in L_2(\F_t),  t = 0,1,...,$ 
.
For such a process in time $t$ we use coherent conditional risk measures: $\sigma_{t|\F_{t-1}}(Z_t)$, $t>0$. By saying that a conditional risk measure is coherent we mean it is \textcolor{black}{measurable,} monotonous ($\sigma_{t|\F_{t-1}}(X)\geq \sigma_{t|\F_{t-1}}(Y)$ for any random variables $X\geq Y$, $X,Y\in L_2$), \textcolor{black}{sub-additive ($\sigma_{t|\F_{t-1}}(X+Y) \leq \sigma_{t|\F_{t-1}}(X) + \sigma_{t|\F_{t-1}}(Y)$ for any random variables $X,Y \in L_2$)}
translation invariant ($\sigma_{t|\F_{t-1}}(X+C))=\sigma_{t|\F_{t-1}}(X)+C$ for any $C \in  L_2(\F_{t-1})$, $X\in L_2$) and positively homogeneous ($\sigma_{t|\F_{t-1}}(\Lambda X))=\Lambda \sigma_{t|\F_{t-1}}(X)$ for any $\Lambda \in L_2(\F_{t-1})$, $X \in L_2$).
Next we construct a nested risk measure $\rho_t$ as follows:
$$ \rho_t(Z_t) = \textcolor{black}{\sigma_{1|\F_{0}}(\sigma_{2|\F_{1}}(...\sigma_{t|\F_{t-1}}(Z_t)))}.$$ \textcolor{black}{It can be easily seen that, once $\F_0$ is trivial, $\rho_t(Z_t)$ is deterministic } a coherent risk measure. We will be interested in the limit version of this measure 
$\rho_{\infty}$ defined as:
\begin{equation}
\rho_{\infty}(Z) = \lim_{t\rightarrow \infty} \rho_t(Z_t) \qquad
\textcolor{black}{a.s.}
\label{rhoinf}
\end{equation}
Such a limit, however, may not exist in general and, therefore, we first formulate the sufficient conditions for the existence of $\rho_{\infty}$. 
\begin{definition}
\label{def1}
\textcolor{black}{We say that process $\left\{Z_t\right\}, t = 0,1,...,$ 
has uniformly bounded support if there exist finite $a < b$ such that  $\supp(Z_t) \subseteq \left\langle a,b \right\rangle$ for all $t$.}
\end{definition}
\begin{definition}
We say that conditional risk measure $\textcolor{black}{\sigma_{t|\F_{t-1}}}$  is  \textit{support-bounded} if for every $X \in L_2$ we have $\textcolor{black}{\sigma_{t|\F_{t-1}}(X)} \in \left\langle a,b \right\rangle$,  where $\textcolor{black}{a=\mathrm{essinf}(X)}$ and $\textcolor{black}{b=\mathrm{esssup}(X)}$.
\end{definition}

\begin{theorem}
Let process
$\left\{Z_t\right\}, t = 0,1,...,$  be adapted to the filtration $\mathbf{F}$, a.s. non-increasing, and let \textcolor{black}{the process have uniformly bounded support.}
Assume that a conditional risk measure $\sigma_{t|\F_{t-1}}(Z_t)$ is coherent and support-bounded for all $t$. Then $\rho_{\infty}(Z)$ exists.
\label{thm:exists}
\end{theorem}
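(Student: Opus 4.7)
The plan is to establish the a.s. existence of the limit via the monotone convergence principle applied to real (or $\F_0$-measurable) sequences: I will show that $\{\rho_t(Z_t)\}_{t\geq 1}$ is a.s. non-increasing and bounded below by the constant $a$, so that its a.s. limit exists.

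The crucial observation is that for every $t$, the conditional risk measure $\sigma_{t+1|\F_t}$ leaves every $\F_t$-measurable random variable invariant. Indeed, positive homogeneity applied to $\Lambda = 0$ gives $\sigma_{t+1|\F_t}(0) = 0$, and translation invariance with $X = 0$ and $C = Y \in L_2(\F_t)$ yields $\sigma_{t+1|\F_t}(Y) = Y$. Applying this with $Y = Z_t$ (which is $\F_t$-measurable since $\{Z_t\}$ is $\mathbf{F}$-adapted) and using the a.s. inequality $Z_{t+1} \leq Z_t$ together with monotonicity of $\sigma_{t+1|\F_t}$, I obtain
\[
\sigma_{t+1|\F_t}(Z_{t+1}) \;\leq\; \sigma_{t+1|\F_t}(Z_t) \;=\; Z_t \qquad \text{a.s.}
\]
Because each $\sigma_{s|\F_{s-1}}$ is monotone, their composition $\sigma_{1|\F_0} \circ \sigma_{2|\F_1} \circ \cdots \circ \sigma_{t|\F_{t-1}}$ is also monotone; applying it to both sides of the previous display and unfolding the definition of $\rho_{t+1}$ gives $\rho_{t+1}(Z_{t+1}) \leq \rho_t(Z_t)$ a.s.

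For the lower bound, the uniformly bounded support hypothesis yields $Z_t \geq a$ a.s. The same translation-invariance argument shows $\sigma_{s|\F_{s-1}}(a) = a$ for every $s$, so iterated use of monotonicity gives $\rho_t(Z_t) \geq a$ a.s. Hence $\{\rho_t(Z_t)\}_t$ is a.s. non-increasing and bounded below by $a$, and therefore converges a.s., so $\rho_\infty(Z)$ exists.

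I do not anticipate a serious obstacle; the argument is essentially a careful chaining of the coherence axioms. The only point requiring attention is verifying that $\sigma_{t+1|\F_t}$ is the identity on $L_2(\F_t)$ (which needs only positive homogeneity plus translation invariance, not support-boundedness) and that monotonicity propagates through the nesting. The uniform bound on the support keeps every intermediate quantity inside $[a,b] \subset L_2$, so no integrability or measurability issue arises along the way.
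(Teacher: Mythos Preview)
Your proof is correct and follows essentially the same route as the paper: show that $\sigma_{t+1|\F_t}(Z_{t+1})\leq\sigma_{t+1|\F_t}(Z_t)=Z_t$ by monotonicity and the identity-on-$\F_t$-measurables property, propagate this through the nested composition to get $\rho_{t+1}(Z_{t+1})\leq\rho_t(Z_t)$, and conclude by monotone convergence of a bounded sequence. The only cosmetic difference is that you derive the lower bound $\rho_t(Z_t)\geq a$ directly from coherence (via $\sigma_{s|\F_{s-1}}(a)=a$ and monotonicity), whereas the paper invokes the support-boundedness hypothesis to declare the sequence bounded; your version is marginally more economical but the structure of the argument is the same.
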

\begin{proof}
\textcolor{black}{
First, thanks to the nested form of $\rho_t$ and the fact that conditional risk measures  $\sigma_{t|\F_{t-1}}$ are support-bounded, we have bounded sequence ${\rho_t(Z_t)}$, $t=0,1,...$. Second,
\begin{eqnarray*}
\rho_{t}(Z_{t}) - \rho_{t+1}(Z_{t+1}) &=&  \rho_t(Z_t) - \rho_t(\sigma_{t+1|\F_{t}}(Z_{t+1}))  \\
&\geq& \rho_t(Z_t) - \rho_t(\sigma_{t+1|\F_{t}}(Z_{t})) \\ &=& \rho_t(Z_t) - \rho_t(Z_t) = 0. 
\end{eqnarray*}
where the inequality follows from (i) coherency of $\sigma_{t+1|\F_{t}}$ and (ii) \textcolor{black}{the fact that $\left\{Z_t\right\}$ is } a.s. non-increasing, $t = 0,1,...,$. 
Hence, the sequence ${\rho_t(Z_t)}$, $t=0,1,...$ is non-increasing. Since every bounded non-increasing sequence has a limit, $\rho_{\infty}(Z)$ exists,  which completes the proof.}
\end{proof}
\noindent For the majority of practical situations, it suffices to assume $\Omega = [0,1] \times [0,1] \times \dots$ and $P=U(0,1)\otimes U(0,1)\otimes \dots$ where $U$ is uniform distribution; indeed, any process can be made Markov by adding its history to the state space and any coordinate of a Markov process can be expressed as a function of the past and an uniform variable (see \cite{kallenberg2001foundations}, Chp. 8).

It is well known (see \cite{ang2018dual}) that every coherent risk measure $\sigma$ can be expressed in a dual form: 
$\sigma(X)=\sup_{Q\in{\mathcal M}}\int X(\omega) Q(\omega)P(d\omega)$, where ${\mathcal M}=\{Q \in L_2: Q \geq 0, \E_P(Q)=1,\E_P(XQ)\leq \sigma(X),X\in L_2\}$  
is  a set of probability distributions known as risk envelope.
Especially, if $P=U(0,1)$, then $\sigma(X)=\sup_{Q\in{\mathcal M}'}\int_0^1 q_X(\omega) Q(\omega)d\omega$ where ${\mathcal M}'$ is a (different) risk envelope and $q_X$ is a quantile function of $X$.

Clearly, as conditional risk measures become coherent risk measures once the conditioning random element is fixed, it can be expressed by means of a dual representation too. Therefore, we further define $\sigma_{t|\F_{t-1}}$ by means of this representation; however, we do not allow the risk envelope to depend on $t$ and we do not allow it to be random: 
\begin{equation}
\sigma_{t|\F_{t-1}}(X)=\Sigma(\L(X|\F_{t-1})),\quad t\geq 1,
\qquad 
\Sigma(P)=\sup_{Q\in\mathcal{M}}\int_0^1
q_P(x) Q(x) dx,
\label{eq:homog}
\end{equation} 
where $\mathcal{M}$ is a deterministic risk envelope and $q_P$ is a quantile function corresponding to $P$. In practice this means that all the conditional measures are "of the same type", e.g. once $\sigma_{t|\F_{t-1}}$ is a conditional CVaR with risk level $\alpha$, then all the other conditional measures have to be conditional CVaRs with level $\alpha$, too.

Later we shall use the following Proposition.
\begin{proposition}\label{prop:homog}
Assume (\ref{eq:homog}). Let  $\F_0$ be trivial (implying that $Z_0$ is deterministic) and let 
\begin{equation}
\label{eq:eps}    
0 \leq Z_{t+1}-Z_t \leq \epsilon_t, \qquad t\geq 0,
\end{equation}
where $\epsilon_t$ is deterministic with $\sum_t \epsilon_t$ finite. Then $\rho_\infty$ exists and
$$
\rho_\infty(Z) = Z_0 + \sigma(\rho_\infty(Z')),
$$
where $Z'_t = Z_{t+1}-Z_0,t\geq 0$ and $\sigma(X) = \Sigma({\mathcal L}(X))$ ( see (\ref{eq:homog})).
\end{proposition}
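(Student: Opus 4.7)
The plan is to peel off the deterministic term $Z_0$ from the nested composition using translation invariance, recognize the remainder as a time-shifted nested risk measure applied to $Z'$, and then pass to the limit in $t$.

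First I would establish the two existence statements. By (\ref{eq:eps}) and $\sum_s \epsilon_s<\infty$, both $\{Z_t\}$ and $\{Z'_t\}$ are a.s.\ non-decreasing and uniformly bounded (by $Z_0+\sum_s\epsilon_s$ and $\sum_s\epsilon_s$ respectively), and the dual representation (\ref{eq:homog}) makes every $\sigma_{t|\F_{t-1}}$ automatically support-bounded (a supremum of expectations against probability densities necessarily lies between $\mathrm{essinf}$ and $\mathrm{esssup}$). The argument of Theorem \ref{thm:exists} with the sign of monotonicity reversed (the key identity becomes $\sigma_{t+1|\F_t}(Z_t)=Z_t$ by translation invariance and positive homogeneity) then shows that $\rho_t(Z_t)$ is a.s.\ non-decreasing in $t$ and uniformly bounded, so $\rho_\infty(Z)$ exists deterministically. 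Applying the same argument to $Z'$ but in the shifted filtration $\G_s:=\F_{s+1}$, the shifted nested measure
\[
\tilde\rho_t := \sigma_{2|\F_1}\bigl(\sigma_{3|\F_2}\bigl(\cdots\sigma_{t+1|\F_t}(Z'_t)\bigr)\bigr)
\]
is also a.s.\ non-decreasing and bounded, so $\rho_\infty(Z')$ exists as an $\F_1$-measurable a.s.\ limit.

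Next I would unwind the nested measure for $Z$. Writing $Z_{t+1}=Z_0+Z'_t$ and applying translation invariance iteratively (valid because the deterministic $Z_0$ is $\F_s$-measurable for every $s$) gives
\begin{align*}
\rho_{t+1}(Z_{t+1})
&= \sigma_{1|\F_0}\bigl(\sigma_{2|\F_1}\bigl(\cdots\sigma_{t+1|\F_t}(Z_0+Z'_t)\bigr)\bigr) \\
&= Z_0+\sigma_{1|\F_0}(\tilde\rho_t),
\end{align*}
and since $\F_0$ is trivial, (\ref{eq:homog}) reduces $\sigma_{1|\F_0}$ to the unconditional coherent risk measure $\sigma$, giving $\rho_{t+1}(Z_{t+1})=Z_0+\sigma(\tilde\rho_t)$.

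Finally I would pass to the limit. Since $\tilde\rho_t\uparrow\rho_\infty(Z')$ a.s.\ with a uniform bound, the monotone convergence theorem gives $\E_Q[\tilde\rho_t]\uparrow\E_Q[\rho_\infty(Z')]$ for every $Q$ in the risk envelope $\mathcal{M}$, and a short sandwich argument (monotonicity of $\sigma$ gives $\sigma(\tilde\rho_t)\le\sigma(\rho_\infty(Z'))$, while $\E_Q[\rho_\infty(Z')]\le\lim_t\sigma(\tilde\rho_t)$ for each $Q$) then yields $\sigma(\tilde\rho_t)\to\sigma(\rho_\infty(Z'))$. Combining with the display of the previous paragraph and sending $t\to\infty$ gives $\rho_\infty(Z)=Z_0+\sigma(\rho_\infty(Z'))$. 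The hard part is really the bookkeeping of the shifted-filtration interpretation of $\rho_\infty(Z')$: this is exactly where the time-uniformity built into (\ref{eq:homog})---the same deterministic risk envelope $\mathcal{M}$ for every $t$---is essential, since without it the composition starting at $\sigma_{2|\F_1}$ would not itself be a bona fide nested coherent risk measure to which the monotonicity and boundedness arguments of Theorem \ref{thm:exists} apply.
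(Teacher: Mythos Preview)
Your argument is correct and follows the same overall skeleton as the paper's proof: peel off $Z_0$ by translation invariance, identify the remainder as a shifted nested risk measure $\tilde\rho_t$ of $Z'$, and interchange $\sigma$ with the limit $t\to\infty$. The genuine difference is in how this last interchange is justified. The paper proves a separate lemma showing (i) that the partial compositions converge \emph{uniformly} in the sup norm (using the explicit tail bounds $e_t=\sum_{\tau\ge t}\epsilon_\tau$) and (ii) that any coherent risk measure is continuous under uniform convergence; the interchange then follows immediately. You instead exploit the a.s.\ monotonicity $\tilde\rho_t\uparrow\rho_\infty(Z')$ directly: monotone convergence for each fixed $Q\in\mathcal M$ combined with monotonicity of $\sigma$ yields the sandwich $\sigma(\tilde\rho_t)\to\sigma(\rho_\infty(Z'))$ without ever establishing uniform convergence. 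Your route is slightly more elementary and uses the dual representation more explicitly; the paper's route has the advantage that its uniform-convergence/continuity lemma is stated separately and is reused later in the proof of the Bellman equation.
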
 
\noindent Note that $\sigma$ is unconditional coherent risk measure. First we prove the following Lemma:
\begin{lemma} (i) Let $Z_0$ be bounded and let  (\ref{eq:eps}) hold. Let $\sigma_{t|\F_{t-1}}(Z_t)$ be defined by (\ref{eq:homog}) and support-bounded for all $t$.
Then
$\rho_\infty(Z)$ exists and 
the convergence in (\ref{rhoinf}) is uniform in max norm.\\
(ii) Any coherent risk measure is continuous with respect to uniform convergence in max norm. 
\label{lem:cauchy}
\end{lemma}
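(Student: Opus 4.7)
The plan for (i) is to establish that $\{\rho_t(Z_t)\}_t$ is uniformly Cauchy in max norm; uniform convergence, and hence the existence of $\rho_\infty(Z)$, follows automatically. The pivotal observation is an absorption identity: for every $Y\in L_2(\F_s)$, translation invariance of $\sigma_{s+1|\F_s}$ applied with $C=Y$, combined with $\sigma_{s+1|\F_s}(0)=0$ (which is a consequence of positive homogeneity), yields $\sigma_{s+1|\F_s}(Y)=Y$. Iterating, $\rho_{t+k}(Y)=\rho_t(Y)$ for all $Y\in L_2(\F_t)$ and all $k\geq 0$.

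Armed with this, I would combine the a.s.\ sandwich $Z_t\leq Z_{t+k}\leq Z_t+\sum_{s=t}^{t+k-1}\epsilon_s$ coming directly from (\ref{eq:eps}) with monotonicity of each conditional $\sigma_{j|\F_{j-1}}$ and translation invariance applied to the deterministic (hence $\F_0$-measurable) constant $\sum_{s=t}^{t+k-1}\epsilon_s$. Propagating the inequalities through the nested composition step by step delivers
\[
\rho_t(Z_t)\;\leq\;\rho_{t+k}(Z_{t+k})\;\leq\;\rho_t(Z_t)+\sum_{s=t}^{t+k-1}\epsilon_s\quad\text{a.s.}
\]
Since $\sum_t\epsilon_t<\infty$, the tail $\sum_{s\geq t}\epsilon_s\to 0$, so the sequence is uniformly Cauchy in max norm. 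Boundedness of $Z_0$ and finiteness of $\sum\epsilon_t$ keep the $Z_t$ within a fixed bounded interval (securing the support-boundedness hypothesis throughout), so the uniform limit $\rho_\infty(Z)$ exists as a bounded $\F_0$-measurable random variable.

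For (ii), let $\sigma$ be a coherent risk measure and set $\delta_n:=\|X_n-X\|_\infty\to 0$. Then a.s.\ $X-\delta_n\leq X_n\leq X+\delta_n$; monotonicity and translation invariance by the constants $\pm\delta_n$ give $|\sigma(X_n)-\sigma(X)|\leq\delta_n\to 0$. The same argument applies pointwise to every conditional component making up the nested $\rho_t$.

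The only real technical nuisance is careful bookkeeping of which $\sigma$-algebra each quantity is measurable with respect to, so that translation invariance is invoked legitimately at each stage of the nesting. Once the absorption identity on $L_2(\F_s)$ is recognized, the rest of (i) is a direct monotone-sandwich argument and (ii) is essentially immediate from the coherent axioms.
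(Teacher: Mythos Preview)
Your proposal is correct and follows essentially the same route as the paper. For (i), the paper also bounds $\rho_{t+s}(Z_{t+s})-\rho_t(Z_t)$ above by the tail $e_t=\sum_{\tau\geq t}\epsilon_\tau$ via monotonicity, translation invariance, and the same absorption identity $\rho_{t+s}(Y)=\rho_t(Y)$ for $Y\in L_2(\F_t)$ (used implicitly rather than stated); for (ii), the paper gives exactly your sandwich $\sigma(X)-e_t\leq\sigma(X_t)\leq\sigma(X)+e_t$.
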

\begin{proof}[Proof of Lemma] (i) Clearly, $Z_t$ fulfills the assumptions of Theorem \ref{thm:exists}, so $\rho_\infty(Z)$ exists. 
For any $t > 0$ and $s >0$, knowing that $\rho_t(Z_t)$ is non-decreasing, we have $$0 \leq \rho_{t+s}(Z_{t+s})-\rho_t(Z_t) \leq \rho_{t+s}(Z_t + e_t) - \rho_t(Z_{t}) = 
\rho_{t}(Z_t + e_t) - \rho_t(Z_{t}) = e_t 
$$
where $e_t=\sum_{\tau \geq t} \epsilon_\tau$.
\\
(ii) Let $Z_t \rightarrow Z^\star$ uniformly. Then there exists a sequence $e_t$ such that
$
Z^\star - e_t \leq Z_t \leq Z^\star + e_t$,
so, by coherence,
$\sigma(Z^\star) - e_t\leq \sigma(Z_t) \leq \sigma(Z^\star) + e_t
$ implying $\sigma(Z_t)\rightarrow \sigma(Z^\star)$.
\end{proof}

\begin{proof}[Proof of the Proposition] Thanks to Theorem \ref{thm:exists}, the limit defining the l.h.s. exists. From the definition and the coherence
$$
\rho_{\infty}(Z) = \lim_{t\rightarrow \infty} \rho_t(Z_t) = Z_0 + \lim_{t\rightarrow \infty} (\sigma(S_t))
$$
where $S_0=0$ and $$S_t = \sigma_{2|\F_1}(\dots\sigma_{t|\F_{t-1}}(Z'_{t-1})\dots)= \Sigma(\L(\Sigma(\dots\Sigma(\L(Z'_{t-1}|\F_{t-1})\dots)|\F_0)),$$ $t\geq 0$. By Lemma \ref{lem:cauchy} (i), $S_{t}$ converges uniformly to $\rho_\infty(Z')$, so, by (ii) of the same Lemma,
$
\lim_{t\rightarrow \infty} (\sigma(S_t))=\sigma(\lim_{t\rightarrow \infty}S_t)
=\sigma(\rho_\infty(Z')).
$
\end{proof}

\section{Contractiveness of the Bellman Operator}
Consider a dynamic programming problem
\[
V(s_0) := \sup_{a_t\in A(s_t), s_{t+1}=T(s_{t},a_{t},W_{t+1}),t\geq 0}\varrho_\infty(\sum_{t=0}^{\infty}\gamma^{t}r(s_{t},a_{t})),
\]
Here,
\begin{itemize}
\item $T : S \times A \times X \rightarrow S$ is a measurable mapping, where $S$ is a complete state space, $A$ is a (measurable) action space and $X$ is a measurable space
\item $W_{t} \in X$ is a Markov stochastic process (we may assume that it is i.i.d. uniform (see above)).
\item $r$ is a \textcolor{black}{uniformly bounded non-negative function}
\item $\varrho_\infty(Z) = -\rho_\infty(-Z)$, where $\rho_\infty$ is a limit nested risk measure (\ref{rhoinf}) defined by filtration $\F_{t}\defined\sigma(W_{t})$
and a support-bounded coherent risk measure $\sigma$ as in (\ref{eq:homog}).
\item $\gamma<1$ is a discount factor
\item $A(\bullet): S \rightarrow A$ is a set mapping.
\end{itemize}

\noindent Since $r$ is uniformly bounded non-negative and $\gamma<1$, process $-Z_T \defined \sum_{t=0}^{T}\gamma^{t}r_{t}(s_{t},a_{t}),$ $T = 0,1,...,$, has \textcolor{black}{uniformly bounded support and is non-increasing.} Combined with the assumption of coherent support-bounded conditional risk measure $\sigma$, it guarantees  existence of $\varrho_\infty$. Hence, the problem is well defined. 

\begin{proposition} (Bellman Equation)
\begin{equation}
V(s)=\sup_{a\in A(s)}[r(s,a)+\gamma\varsigma(V(T(s,a,W))],\qquad s \in S,
\label{eq:bellman}
\end{equation}
where $\varsigma(Z)=-\sigma(-Z)$.
\end{proposition}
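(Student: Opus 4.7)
The strategy is to apply Proposition \ref{prop:homog} (in the form appropriate for $\varrho_\infty$) to peel off the first reward, identify the resulting tail as a value function via the Markov structure, and then interchange the sup over continuation policies with the outer risk.

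Fix any $a_{0}\in A(s_{0})$ and any admissible continuation $\pi=(a_{1},a_{2},\dots)$. Set $U_{t}=\sum_{\tau=0}^{t}\gamma^{\tau}r(s_{\tau},a_{\tau})$. Then $U_{t}$ is non-decreasing, $U_{0}=r(s_{0},a_{0})$ is deterministic (since $\F_{0}$ is trivial), and the increments satisfy $U_{t+1}-U_{t}=\gamma^{t+1}r(s_{t+1},a_{t+1})\in[0,\gamma^{t+1}M]$ with $M=\sup r<\infty$ and $\sum_{t}\gamma^{t+1}M<\infty$, so the hypotheses of Proposition \ref{prop:homog} are fulfilled. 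Applying it, read with $\varrho_{\infty}$ in place of $\rho_{\infty}$ (legitimate because $\varsigma=-\sigma(-\cdot)$ is itself a coherent risk measure and the same argument goes through with signs flipped), and then using positive homogeneity to factor out $\gamma$, yields
\[
\varrho_{\infty}\Bigl(\sum_{t\ge 0}\gamma^{t}r(s_{t},a_{t})\Bigr) \;=\; r(s_{0},a_{0}) + \gamma\,\varsigma\bigl(\varrho_{\infty}^{(1)}(\tilde U^{\pi})\bigr),
\]
where $\tilde U^{\pi}_{t}=\sum_{\tau=0}^{t}\gamma^{\tau}r(s_{\tau+1},a_{\tau+1})$ is the discounted reward of the continuation problem and $\varrho_{\infty}^{(1)}$ denotes the nested risk measure built on the shifted filtration $(\F_{1},\F_{2},\dots)$.

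Because $\{W_{t}\}$ is i.i.d.\ and the risk envelope $\mathcal{M}$ in \eqref{eq:homog} is the same at every stage, conditioning on $W_{1}$ --- equivalently on $s_{1}=T(s_{0},a_{0},W_{1})$ --- makes $\varrho_{\infty}^{(1)}(\tilde U^{\pi})$ coincide with $\varrho_{\infty}$ applied to the discounted reward of an independent copy of the problem initialized at $s_{1}$. Taking the supremum over continuation policies $\pi_{1:}$ therefore turns this inner quantity, pointwise in $W_{1}$, into $V(T(s_{0},a_{0},W_{1}))$.

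The main remaining step is to commute the supremum over $\pi_{1:}$ with the outer $\varsigma$:
\[
\sup_{\pi_{1:}}\varsigma\bigl(\varrho_{\infty}^{(1)}(\tilde U^{\pi})\bigr) \;=\; \varsigma\Bigl(\sup_{\pi_{1:}}\varrho_{\infty}^{(1)}(\tilde U^{\pi})\Bigr) \;=\; \varsigma\bigl(V(T(s_{0},a_{0},W_{1}))\bigr).
\]
Monotonicity of $\varsigma$ gives $\le$ directly; for the reverse one must produce, for every $\varepsilon>0$, a continuation policy attaining $V(s_{1})-\varepsilon$ measurably in $s_{1}$, a standard measurable-selection argument that crucially uses the stationarity and state-independence of both $T$ and $\mathcal{M}$, so that the continuation optimization decouples across the sections $\{W_{1}=w\}$. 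Granted the interchange, a final $\sup$ over $a_{0}\in A(s_{0})$ produces \eqref{eq:bellman}. I expect this measurable-selection / interchange step to be the main obstacle; everything else is a rearrangement using Proposition \ref{prop:homog}, translation invariance, and positive homogeneity.
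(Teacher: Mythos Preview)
Your argument is correct and follows the same route as the paper: apply Proposition~\ref{prop:homog} (in its $\varrho_\infty$ form) to peel off $r(s_0,a_0)$ and factor out $\gamma$, then interchange the supremum over continuations with $\varsigma$ so as to recognize $V(T(s_0,a_0,W))$ inside. The paper's proof is terser---it simply cites Proposition~\ref{prop:homog}, ``basic properties of supremum,'' and Lemma~\ref{lem:cauchy}~(ii) without elaboration---whereas you are explicit that the sup--$\varsigma$ interchange hinges on a measurable $\varepsilon$-optimal selection, a point the paper glosses over.
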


\noindent Note that (\ref{eq:bellman}) may be also understood as a definition of a risk-averse version of  a reinforcement learning problem (see \cite{sutton2018reinforcement}).

\begin{proof}
Thanks to Proposition \ref{prop:homog}, basic properties of supremum, and Lemma \ref{lem:cauchy} (ii), we get, for any $s_0 \in S$: 
\begin{multline*}
V(s_0) = \sup_{a_t\in A(s_t),s_{t+1}=T(s_{t},a_{t},W_{t+1}),t\geq 0}
\left[r(s_0,a_0) + \gamma \varsigma\left(
\varrho_\infty\left(\sum_{t=1}^{\infty}\gamma^{t-1}r(s_{t},a_{t})\right)
\right)
\right]
\\
\sup_{a_0\in A(s_0)}
\left[r(s_0,a_0) + \gamma \varsigma\left(\sup_{a_t\in A(s_t),
s_{t+1}=T(s_{t},a_{t},W_{t+1}),t\geq 0}
\varrho_\infty\left(\sum_{t=1}^{\infty}\gamma^{t-1}r(s_{t},a_{t})\right)
\right)
\right],
\end{multline*}
which proves the Proposition.
\end{proof}

\begin{theorem}
The operator 
\[
B:(BV)(s)\defined\sup_{a\in A(s)}[r(s,a)+\gamma\varsigma(V(T(s,a,W))]
\]
is a $\gamma$ contraction w.r.t. sup norm.
\end{theorem}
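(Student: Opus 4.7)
The plan is to verify the standard Banach--Picard style contraction estimate $\|BV_1 - BV_2\|_\infty \leq \gamma \|V_1-V_2\|_\infty$ by reducing everything to a Lipschitz property of the one-step coherent risk measure $\varsigma$ with respect to the sup norm on random variables.

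First, I would prove the auxiliary fact that for the coherent risk measure $\sigma$ (and hence for $\varsigma(\cdot)=-\sigma(-\cdot)$) we have $|\varsigma(X)-\varsigma(Y)| \leq \|X-Y\|_\infty$ for all bounded measurable $X,Y$. This is essentially Lemma \ref{lem:cauchy}(ii): writing $c = \|X-Y\|_\infty$, the bounds $Y-c \leq X \leq Y+c$ hold a.s., and then monotonicity and translation invariance of $\sigma$ (which, by negation, transfer to $\varsigma$) yield $\varsigma(Y)-c \leq \varsigma(X) \leq \varsigma(Y)+c$.

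Next, fix $V_1,V_2 : S \to \R$ bounded and write $\delta \defined \|V_1-V_2\|_\infty$. For each $s \in S$ and each admissible $a \in A(s)$, the random variables $V_i(T(s,a,W))$ differ pointwise by at most $\delta$, so by the auxiliary Lipschitz bound
\[
\bigl| \varsigma(V_1(T(s,a,W))) - \varsigma(V_2(T(s,a,W))) \bigr| \leq \delta.
\]
Multiplying by $\gamma$, the $r(s,a)$ terms cancel and the integrand inside the supremum defining $(BV_i)(s)$ differs by at most $\gamma\delta$. Applying the elementary inequality $|\sup_a f_1(a) - \sup_a f_2(a)| \leq \sup_a |f_1(a) - f_2(a)|$ gives $|(BV_1)(s) - (BV_2)(s)| \leq \gamma \delta$ for every $s$, and taking the supremum over $s \in S$ finishes the argument.

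The only mild subtlety is making sure the auxiliary Lipschitz estimate is legitimate in the present setting: one must check that $V_i(T(s,a,W))$ is an element of the domain of $\sigma$ (i.e.\ lies in $L_2$ and is bounded, so that the essinf/esssup bounds of the support-bounded envelope apply). This is immediate because $V$ is bounded --- itself a consequence of the fact that $r$ is uniformly bounded and $\gamma<1$, so $V$ is dominated by $\|r\|_\infty/(1-\gamma)$ --- and $T(s,a,\cdot)$ is measurable. Beyond this bookkeeping point, no genuine obstacle arises; the argument is the standard contraction proof, with coherence of $\varsigma$ (monotonicity plus translation invariance) playing exactly the role that linearity of conditional expectation plays in the classical risk-neutral case.
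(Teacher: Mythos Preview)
Your proof is correct and is in fact the cleaner, textbook version of the argument. The paper proceeds differently: it splits $S$ into $S_U=\{s:(BU)(s)>(BV)(s)\}$ and its complement, picks an $\epsilon$-optimal action $a_{U,s}$ on $S_U$, and then invokes the explicit dual representation $\varsigma(X)=-\sup_{Q\in\mathcal{M}}\int_0^1 (-X)\,Q\,dw$ to bound the resulting difference, finally using that each $Q\in\mathcal{M}$ is a probability density so that the integral against $|U-V|$ is dominated by $\|U-V\|_\infty$. Your route avoids both the $\epsilon$-optimal action device and the envelope representation entirely: you obtain the $1$-Lipschitz property of $\varsigma$ directly from monotonicity plus translation invariance (this is essentially the content of Lemma~\ref{lem:cauchy}(ii)) and then apply the elementary estimate $|\sup_a f_1-\sup_a f_2|\leq\sup_a|f_1-f_2|$. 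The payoff of your approach is generality and brevity---it works for any monotone, translation-invariant one-step mapping, not only those admitting the particular dual form~(\ref{eq:homog}); the paper's approach, by contrast, makes explicit where the risk envelope $\mathcal{M}$ enters the contraction bound.
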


\begin{proof}
Fix $\epsilon >0$ and, for any value function $V$, denote $a_{V,s}$ the $\epsilon$-optimal solution of $\sup_{a\in A(s)}[r(s,a)+\gamma\varsigma(V(T(s,a,W))]$. We have 
\begin{multline*}
\|BU-BV\|_{\infty}=\underbrace{\sup_{s\in S_{U}\defined\{s:(BU)(s)>(BV)(s))\}}[(BU)(s)-(BV)(s)]}_{b_{U}}\\
\vee\underbrace{\sup_{s\in S_{V}\defined\{s:(BU)(s)\leq(BV)(s))\}}[(BV)(s)-(BU)(s)]}_{b_{V}}
\end{multline*}
Further we have
\begin{multline*}
b_{U}=\sup_{s\in S_{U}}|\sup_{a\in A(s)}[r(s,a)+\gamma\varsigma(U(T(s,a,W))]-\sup_{a\in A(s)}[r(s,a)+\gamma\varsigma(V(T(s,a,W))]|\\
\leq\sup_{s\in S_{U}}[r(s,a_{U,s})+\gamma\varsigma(U(T(s,a_{U,s},W)))-r(s,a_{U,s})-\gamma\varsigma(V(T(s,a_{U,s},W)))] -  \epsilon\\
=\gamma\sup_{s\in S_{U}}[\varsigma(U(T(s,a_{U,s},W)))-\varsigma(V(T(s,a_{U,s},W)))]  -  \epsilon \\
=\gamma\sup_{s\in S_{U}}[-\sup_{Q\in\mathcal{M}}\int_0^1-U(T(s,a_{U,s},w))Q(w)dw+\sup_{Q\in\mathcal{M}}\int_0^1-V(T(s,a_{U,s},w))Q(w)dw] - \epsilon \\
\leq\gamma\sup_{s\in S_{U}}[-\int_0^1 -U(T(s,a_{U,s},w))Q_{V,s}(w)dw+\int_0^1-V(T(s,a_{U,s},w))Q_{V,s}(w)dw] - \epsilon \\
\leq\gamma\sup_{s\in S_{U}}\int_0^1|U(T(s,a_{U,s},w))-V(T(s,a_{U,s},w))|Q_{V,s}(w)dw - \epsilon \leq\gamma\|U-V\|_\infty - \epsilon
\end{multline*}
where $Q_{V,s}=\arg\max_{Q\in\mathcal{M}}\int_0^1-V(T(s,a_{U,s},w))Q(w)dw$
(the last inequality holds because $Q(w)dw$ is a probability measure). By releasing $\epsilon$ and performing a limit transition, we get the $b_U \leq \gamma\|U-V\|_\infty$. By making analogous steps for $b_V$, we get the Theorem.
\end{proof}

\noindent Thanks to these Theorem, many solution techniques, relying on the contractiveness of the Bellman operator, work when the expectation is replaced by a coherent risk measure. Here we only demonstrate this for the well known Value Iteration Algorithm (see \cite{sutton2018reinforcement})

Let $V_0: S \rightarrow [0,\infty)$ be arbitrary and let $\theta$ be a pre-chosen precision level. The Value Iteration Algorithm may be written as follows:
\begin{center}
\begin{minipage}{6cm}
\begin{framed}
\begin{algorithmic}
\State $n \leftarrow 0$
\Repeat
\State $n \leftarrow n+1$
\State $V_n \leftarrow B V_{n-1}$
\Until $\|V_n-V_{n-1}\|_\infty \leq \theta$
\end{algorithmic}
\end{framed}
\end{minipage}
\end{center}

\noindent The following result is a direct consequence of the Banach Fixed Point Theorem (see  \cite{granas2003fixed}, 1.1).
\begin{theorem} There exists $V_\star$ solving (\ref{eq:bellman}), 
$$
\| V_n - V_\star \|_\infty \leq \frac{\gamma^n}{1-\gamma } \| V_1 - V_ 0 \|_\infty
$$
for any $n$.
\end{theorem}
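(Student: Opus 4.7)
The plan is to apply the Banach Fixed Point Theorem directly, so the work is essentially bookkeeping: verify the hypotheses, then deduce both the existence of $V_\star$ and the stated rate from the standard contraction-mapping estimate.

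First I would fix the ambient complete metric space. Since $r$ is uniformly bounded (say $0 \le r \le R$) and $\gamma<1$, any value function relevant here is bounded, so the natural space is $\mathcal{B}(S)$, the Banach space of bounded real-valued functions on $S$ equipped with the sup norm. I would check that $B$ maps $\mathcal{B}(S)$ into itself: if $V$ is bounded, then $\varsigma(V(T(s,a,W)))$ is bounded by $\|V\|_\infty$ (because $\varsigma=-\sigma(-\cdot)$ inherits monotonicity and translation invariance from $\sigma$, hence $|\varsigma(Y)|\le\|Y\|_\infty$), and therefore $\|BV\|_\infty \le R + \gamma\|V\|_\infty < \infty$. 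Combined with the previous theorem, $B$ is a $\gamma$-contraction on $\mathcal{B}(S)$.

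Next, I would invoke the Banach Fixed Point Theorem (the reference \cite{granas2003fixed} already provided) to obtain a unique $V_\star \in \mathcal{B}(S)$ with $BV_\star = V_\star$; unwinding the definition of $B$, this is exactly a solution of (\ref{eq:bellman}).

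For the error bound I would use the standard telescoping argument. By the contraction property applied iteratively, $\|V_{k+1}-V_k\|_\infty \le \gamma^k \|V_1 - V_0\|_\infty$ for every $k\ge 0$. Hence for every $m>n$,
\[
\|V_m - V_n\|_\infty \;\le\; \sum_{k=n}^{m-1}\|V_{k+1}-V_k\|_\infty \;\le\; \|V_1-V_0\|_\infty\sum_{k=n}^{m-1}\gamma^k.
\]
Letting $m\to\infty$ and using $V_m\to V_\star$ in sup norm (which itself follows from the Banach theorem, or from the Cauchy property of the above estimate together with completeness of $\mathcal{B}(S)$), we get
\[
\|V_n - V_\star\|_\infty \;\le\; \|V_1-V_0\|_\infty \sum_{k=n}^{\infty}\gamma^k \;=\; \frac{\gamma^n}{1-\gamma}\,\|V_1-V_0\|_\infty,
\]
which is the asserted bound.

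There is no real obstacle here; the only point worth being careful about is ensuring that the metric space on which Banach's theorem is invoked is genuinely complete and invariant under $B$, which is why I would spell out the boundedness check for $BV$ and the resulting inclusion $B:\mathcal{B}(S)\to\mathcal{B}(S)$ before quoting the fixed-point theorem.
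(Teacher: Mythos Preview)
Your proposal is correct and follows exactly the route the paper takes: the paper simply states that the result is a direct consequence of the Banach Fixed Point Theorem, and you have spelled out precisely that argument (completeness of the space of bounded functions, invariance under $B$, and the standard a~priori error estimate). Your additional care in checking $B:\mathcal{B}(S)\to\mathcal{B}(S)$ is a welcome elaboration but not a departure from the paper's approach.
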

\begin{corollary}
The Value Iteration Algorithm stops after a finite number of steps.
\end{corollary}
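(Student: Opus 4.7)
The plan is to reduce the claim directly to the contractiveness established in the previous theorem. The algorithm's stopping criterion is $\|V_n - V_{n-1}\|_\infty \leq \theta$, so I need to show that this inequality is satisfied for some finite $n$ regardless of the initialization $V_0$ and the precision $\theta > 0$.

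First, I would observe that since $V_n = BV_{n-1}$ and $V_{n-1} = BV_{n-2}$ for $n \geq 2$, the contractiveness of $B$ with modulus $\gamma$ yields
$$\|V_n - V_{n-1}\|_\infty = \|BV_{n-1} - BV_{n-2}\|_\infty \leq \gamma\,\|V_{n-1} - V_{n-2}\|_\infty.$$
A straightforward induction then gives $\|V_n - V_{n-1}\|_\infty \leq \gamma^{n-1}\|V_1 - V_0\|_\infty$ for every $n \geq 1$.

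Next, I would note that $\|V_1 - V_0\|_\infty$ is a finite constant (the uniform boundedness of $r$ combined with $V_0 : S \to [0,\infty)$ being arbitrary can be handled by choosing any bounded $V_0$; in any case, one step of the Bellman operator applied to a bounded $V_0$ yields a bounded $V_1$ since $r$ is uniformly bounded and $\varsigma$ preserves boundedness by support-boundedness of $\sigma$). Since $\gamma < 1$, the sequence $\gamma^{n-1}$ tends to zero, so there exists a finite $N$ with $\gamma^{N-1}\|V_1 - V_0\|_\infty \leq \theta$. For this $N$ the stopping condition is met, and the algorithm terminates after at most $N$ iterations. The edge case $\|V_1 - V_0\|_\infty = 0$ is trivial since then the algorithm halts after the first iteration.

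There is no substantial obstacle here; the whole argument is a direct consequence of the contraction property together with the geometric decay $\gamma^{n-1} \to 0$. The only mild subtlety is ensuring $\|V_1 - V_0\|_\infty < \infty$, which I would address explicitly by appealing to the uniform boundedness of $r$ and support-boundedness of $\sigma$ from the problem setup.
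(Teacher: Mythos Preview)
Your proposal is correct and matches the paper's implicit argument: the paper gives no separate proof of the Corollary, presenting it as an immediate consequence of the preceding Banach fixed-point bound $\|V_n - V_\star\|_\infty \leq \frac{\gamma^n}{1-\gamma}\|V_1 - V_0\|_\infty$. Your direct route via $\|V_n - V_{n-1}\|_\infty \leq \gamma^{n-1}\|V_1 - V_0\|_\infty$ is exactly the standard contraction iteration estimate and leads to the same conclusion; the only difference is that you bypass the fixed point $V_\star$ and work with consecutive iterates, which is in fact slightly more direct for checking the stopping criterion. Your remark about needing $\|V_1 - V_0\|_\infty < \infty$ is a valid observation that the paper leaves tacit.
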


\noindent {\bf Acknoledgements.} This work has been supported by grant No. GA19-11062S of the Czech Science Foundation.

\bibliography{bibliography}

\end{document}